\definecolor{darkred}{rgb}{0.5,0,0}
\definecolor{darkgreen}{rgb}{0,0.5,0}
\definecolor{darkblue}{rgb}{0,0,0.5}
\DeclareMathOperator*{\argmax}{arg\,max}
\DeclareMathOperator{\codim}{codim}
\DeclareMathOperator{\conv}{conv}
\newtheorem{problem}{Problem}
\newtheorem{proposition}{Proposition}
\renewcommand{\leq}{\leqslant}
\renewcommand{\geq}{\geqslant}
\newcommand\hb[2]{\genfrac{}{}{0pt}{}{#1}{#2}}
\newcommand{\bd}{\mathrm{d}}
\newcommand{\be}{\mathrm{e}}
\newcommand{\cc}{\cellcolor[gray]{0.9}}
\newcommand\cut[2]{\mathrm{SDP}_{#1}^{#2}}
\newcommand{\id}{\mathds{1}}
\newcommand{\innp}[2]{\left\langle{#1},{#2}\right\rangle}
\newcommand\KG[2]{K_G\left({#1}\ifx#2\empty\else\mapsto\fi{#2}\right)}
\newcommand{\NN}{\mathbb{N}}
\newcommand{\RR}{\mathbb{R}}
\newcommand\sdp[2]{\mathrm{SDP}_{#1}\ifx#2\empty\else\left({#2}\right)\fi}
\newcommand{\ta}{\tilde{a}}
\newcommand{\tb}{\tilde{b}}
\newcommand{\tinnp}[2]{\langle{#1},{#2}\rangle}
\def\CT@@do@color{%
  \global\let\CT@do@color\relax
  \@tempdima\wd\z@
  \advance\@tempdima\@tempdimb
  \advance\@tempdima\@tempdimc
  \advance\@tempdimb0.9\tabcolsep
  \advance\@tempdimc\tabcolsep
  \advance\@tempdima2\tabcolsep
  \kern-\@tempdimb
  \leaders\vrule
  \hskip\@tempdima\@plus 1fill
  \kern-\@tempdimc
\hskip-\wd\z@ \@plus -1fill }
\begin{document}

\author{Sébastien Designolle}
\affiliation{Zuse Institute Berlin, 14195 Berlin, Germany}
\affiliation{Inria, ENS de Lyon, UCBL, LIP, 69342, Lyon Cedex 07, France}
\author{Tamás Vértesi}
\affiliation{HUN-REN Institute for Nuclear Research, 4001 Debrecen, Hungary}
\author{Sebastian Pokutta}
\affiliation{Zuse Institute Berlin, 14195 Berlin, Germany}
\affiliation{Institut für Mathematik, Technische Universität Berlin, Straße des 17. Juni 136, 10623 Berlin, Germany}

\title{Better bounds on finite-order Grothendieck constants}
\date{2nd February 2026}

\begin{abstract}
  Grothendieck constants $\KG{d}{}$ bound the advantage of $d$-dimensional strategies over $1$-dimensional ones in a specific optimisation task.
  They have applications ranging from approximation algorithms to quantum nonlocality.
  However, apart from $d=2$, their values are unknown.
  Here, we exploit a recent Frank-Wolfe approach to provide good candidates for lower bounding some of these constants.
  The complete proof relies on solving difficult binary quadratic optimisation problems.
  For $d\in\{3,4,5\}$, we construct specific rectangular instances that we can solve to certify better bounds than those previously known; by monotonicity, our lower bounds improve on the state of the art for $d\leqslant9$.
  For $d\in\{4,7,8\}$, we exploit elegant structures to build highly symmetric instances achieving even greater bounds; however, we can only solve them heuristically.
  We also recall the standard relation with violations of Bell inequalities and elaborate on it to interpret generalised Grothendieck constants $\KG{d}{2}$ as the advantage of complex $d$-dimensional quantum mechanics over real qubit quantum mechanics.
  Motivated by this connection, we also improve the bounds on $\KG{d}{2}$.
\end{abstract}

\maketitle

\section{Introduction}

Published in French in a Brazilian journal, Grothendieck's pioneering work on Banach spaces from 1953~\cite{Gro53}, now informally known as his \emph{Résumé}, has long remained unnoticed.
In 1968, Lindenstrauss and Pe\l{}czy\'nski~\cite{LP68} discovered it and rephrased the main result, the Grothendieck inequality, which proves a relationship between three fundamental tensor norms through the so-called Grothendieck constant, denoted $K_G$.
Since then, this far-reaching theorem has found numerous applications~\cite{Pis11}, in particular in combinatorial optimisation where it is at the heart of an algorithm to approximate the cut-norm of a matrix~\cite{KN12}.

Quantum information is another field where this result is influential: following early observations by Tsirelson~\cite{Tsi87}, an explicit connection has been established with the noise robustness in Bell experiments~\cite{Bel64,AGT06}.
These experiments aim at exhibiting a fascinating property of quantum mechanics in correlation scenarios: nonlocality~\cite{BCP+14}.
The link with Grothendieck's theorem has led to a surge of interest in the value of $\KG{3}{}$, the Grothendieck constant of order three.
Many works have thus demonstrated increasingly precise lower bounds~\cite{CHSH69,Ver08,HLZ+15,BNV16,DBV17,DIB+23} and upper bounds~\cite{Kri79,HQV+17,DIB+23} on its value.
More recently, a numerical method has also been developed to come up with an even more precise (but not provable) estimate of $\KG{3}{}$~\cite{SM25}.

For Grothendieck constants of higher orders, the link with quantum nonlocality remains~\cite{AGT06,VP08}.
However, the bounds on their values have been less studied and are less tight~\cite{Kri79,FR94,Ver08,BBT11,HLZ+15,DBV17}.
There are quite a few difficulties that explain this relative scarcity of results, many of them being manifestations of the curse of dimensionality.
Finding suitable high-dimensional ansätze indeed becomes increasingly hard and resulting instances involve sizes that rapidly become intractable.

In this article, we combine the recent projection technique from~\cite{DIB+23} with the powerful solver developed in~\cite{DBV17} to obtain better lower bounds on $\KG{d}{}$ for $3\leq d\leq9$.
Following~\cite{FR94}, we also consider symmetric structures in high dimensions emerging from highly symmetric line packings~\cite{FJM18} to suggest even better bounds on $\KG{4}{}$, $\KG{7}{}$, and $\KG{8}{}$, that we unfortunately cannot prove as they involve optimisation problems that we only solve heuristically.
We also consider the generalised Grothendieck constants $\KG{d}{2}$ and interpret them as the advantage of $d$-dimensional quantum mechanics over real qubit quantum mechanics, a fact that was already studied in~\cite{VP08,VP09}.
Our bounds on $\KG{d}{}$ are analytical, while those on $\KG{d}{2}$ strongly rely on numerical methods: we post-process the upper bound on $\KG{3}{2}$ to convert it into an exact result, but the lower bounds remain inaccurate.

We first formally define the constants we want to bound in \cref{sec:preliminaries} before presenting in \cref{sec:KGd_low} our method and main results on lower bounds on $\KG{d}{}$, summarised in \cref{tab:sym,tab:asy}.
Then we turn to generalised constants in \cref{sec:KGd2}, where we review existing bounds before deriving ours, which are particularly tight on the value of $\KG{3}{2}$.
Finally, we recall the connection with quantum mechanics in \cref{sec:quantum} and conclude in \cref{sec:discussion}.

\section{Preliminaries}
\label{sec:preliminaries}

Given a real matrix $M$ of size $m_1\times m_2$, we define
\begin{equation}
  \sdp{d}{M}=\max\left\{\sum_{x=1}^{m_1}\sum_{y=1}^{m_2} M_{xy}\innp{a_x}{b_y}~\Big|~\forall x\in[m_1],~a_x\in S^{d-1},~\forall y\in[m_2],~b_y\in S^{d-1}\right\},
  \label{eqn:SDPdM}
\end{equation}
where $[m]=\{1,\ldots,m\}$ and where $S^{d-1}$ is the $d$-dimensional unit sphere, which reduces to $S^0=\{-1,1\}$ for $d=1$.
By introducing the set
\begin{equation}
  \cut{d}{m_1,m_2}=\conv\left\{X~\big|~X_{xy}=\innp{a_x}{b_y},~\forall x\in[m_1],~a_x\in S^{d-1},~\forall y\in[m_2],~b_y\in S^{d-1}\right\},
  \label{eqn:cut}
\end{equation}
we get the equivalent definition
\begin{equation}
  \sdp{d}{M}=\max\left\{\innp{M}{X}~\big|~X\in\cut{d}{m_1,m_2}\right\}.
  \label{eqn:SDPdM_cut}
\end{equation}
The reason for this name comes from the interpretation of the Grothendieck constant that we are about to define in the context of rank-constrained semidefinite programming~\cite{BOV14}.
In the following, when $m_1=m_2$, we simply write $\cut{d}{m,m}=\cut{d}{m}$; also, when the size $m_1,m_2$ is either clear from the context of irrelevant, we will use the shorthand notation $\cut{d}{m_1,m_2}=\cut{d}{}$.

In essence, the Grothendieck inequality states that there exists a (finite) constant independent of the size of the matrix $M$ that bounds the ratio between the quantities $\sdp{d}{M}$ for various $d$.
More formally, given $n\leq d$, for all real matrices $M$, we have~\cite{Gro53}
\begin{equation}
  \sdp{d}{M}\leq\KG{d}{n}\sdp{n}{M},
  \label{eqn:Grothendieck}
\end{equation}
so that the exact definition of these \emph{generalised Grothendieck constants} is
\begin{equation}
  \KG{d}{n}=\sup\left\{\frac{\sdp{d}{M}}{\sdp{n}{M}}~\Big|~M\in\RR^{m_1\times m_2},\quad m_1,m_2\in\NN\right\}.
  \label{eqn:KGdn}
\end{equation}
The standard Grothendieck constant of order $d$ is obtained when $n=1$, in which case we use the shorthand notation $\KG{d}{1}=\KG{d}{}$.
The infinite-order Grothendieck constant $K_G$ initially studied in~\cite{Gro53} corresponds to the limit $\lim_{d\rightarrow\infty}\KG{d}{}$, but it is out of the scope of our work, although we briefly mention it in \cref{sec:discussion}.
We refer the reader interested in general aspects and further generalisations of these constants to~\cite{Bri11}.

\section{Lower bounds on \texorpdfstring{$\KG{d}{}$}{KG(d)}}
\label{sec:KGd_low}

Given the definition of $\KG{d}{}$ in \cref{eqn:KGdn}, any matrix $M$ automatically provides a valid lower bound.
However, there are two main difficulties when looking for \emph{good} lower bounds.
\begin{problem}
  Given $m_1$ and $m_2$, how to find a matrix $M$ such that the inequality in \cref{eqn:Grothendieck} is as tight as possible?
  \label{pb:tight}
\end{problem}
\begin{problem}
  Given such a matrix $M$, how to compute the resulting $\sdp{1}{M}$ or at least a close upper bound?
  \label{pb:qubo}
\end{problem}
\cref{pb:qubo} is the most limiting one, as the computation of $\sdp{1}{M}$ is equivalent to MaxCut and therefore NP-hard, so that the size of the matrices to consider are limited by the methods and resources available to compute this number.
In general, solving \cref{pb:tight} exactly is out of reach, but good candidates can be found and this suffices to derive bounds.

In the rest of this section, we first recall the method from~\cite{DIB+23} to both problems above, in particular to \cref{pb:tight}.
To illustrate the idea of the method, we give up on solving \cref{pb:qubo} for the sake of the elegance of the solution to \cref{pb:tight}, providing instances of remarkable symmetry for which future works may solve \cref{pb:qubo} to improve on the bounds on $\KG{4}{}$, $\KG{7}{}$, and $\KG{8}{}$.
We then focus on \cref{pb:qubo} and consider rectangular matrices allowing us to obtain certified bounds on $\KG{3}{}$, $\KG{4}{}$, and $\KG{5}{}$ that beat the literature up to $d=9$, by monotonicity.

\subsection{Obtaining facets of the symmetrised correlation polytope}
\label{sec:fw}

When $n=1$, the set $\cut{1}{}$ is a polytope, called the correlation polytope~\cite{Pit91}.
In~\cite{BNV16,DBV17,DIB+23} the method used to solve \cref{pb:tight} is to start from a point $P\in\cut{d}{}$ and to derive $M$ as a hyperplane separating $P$ from $\cut{1}{}$.
This hyperplane is obtained by solving the projection of $P$ onto the correlation polytope $\cut{1}{}$ via Frank-Wolfe algorithms.
We refer to~\cite{Pok24} for a gentle introduction to FW algorithms, to~\cite{BCC+25} for a complete review, and to~\cite[Appendix~C]{DIB+23} for the details of our implementation.
Note that some accelerations developed in~\cite{DIB+23,DVP24} and used in this work are now part of the \texttt{FrankWolfe.jl} package~\cite{BDH+24}.

In particular, the symmetrisation described in~\cite{DVP24} is crucial, but as it depends on the underlying group and its action of the matrices we consider, exposing the full structure of the \emph{symmetrised correlation polytope} for each group would be tedious.
Instead, we give an brief general definition of this polytope and refer to~\cite{DVP24} for a detailed example.
Given a group $G$ acting on $[m_1]$ and $[m_2]$ (by means of signed permutations), the symmetrised correlation polytope $G(\cut{1}{m_1,m_2})$ is the convex hull of the averages of all orbits of the vertices of $\cut{1}{m_1,m_2}$ under the action of $G$.
Note that this polytope lives in a subspace of the space invariant under the action of $G$ on matrices of size $m_1\times m_2$.

In \cref{fig1} we illustrate the various geometrical cases that we encounter in this work.
Their understanding justifies the procedure that we describe in \cref{alg:facet} to derive facets of the symmetrised correlation polytope. 
Importantly, for symmetric instances, the dimension of the ambient space is strictly smaller than $m_1m_2$.
In the following, we consistently denote these facets by $A$, while $M$ will indicate separating hyperplanes without this extra property.
Putative facets will also be denoted by $A$, that is, in cases where we rely on heuristic methods to compute $\sdp{1}{A}$.

These heuristic methods play an important role throughout the FW algorithm as they quickly give a good direction to make primal progress.
Here we present them in a generalised framework that will turn useful when generalising our algorithm to $\KG{d}{2}$ in \cref{sec:KGd2}.
At a given step of the FW algorithm minimising the squared distance to the set $\cut{n}{}$, finding the best direction with respect to the current gradient $M$ is exactly the problem in \cref{eqn:SDPdM_cut}.
This subroutine is called the Linear Minimisation Oracle (LMO) and, in the course of the algorithm, we usually use an alternating minimisation to obtain heuristic solutions that are enough to make progress~\cite[Appendix~B.1]{DIB+23}.
This procedure is known as a \emph{seesaw} optimisation in the quantum community~\cite{WW01}.

More formally, for all $x\in[m_1]$, we pick a random $a^h_x\in S^{n-1}$ and we compute, for all $y\in[m_2]$,
\begin{equation}
  b^h_y = \argmax_{b_y\in S^{n-1}} \sum_{y=1}^{m_2} b_y \bigg(\sum_{x=1}^{m_1} M_{xy}a^h_x\bigg),\qquad\text{that is,}\qquad b^h_y = \frac{\sum_{\vphantom{y}x=1}^{m_1} M_{xy}a^h_x}{\left\|\sum_{\vphantom{y}x=1}^{m_1} M_{xy}a^h_x\right\|}.
  \label{eqn:bh}
\end{equation}
In the unlikely case where the denominator happens to be zero, we set $b^h_y$ to be a predetermined vector, for instance, $(1,0,\ldots,0)\in S^{n-1}$.
Similarly, we then use these $b^h_y$ to compute
\begin{equation}
  a^h = \argmax_{a_x\in S^{n-1}} \sum_{x=1}^{m_1} a_x \bigg(\sum_{y=1}^{m_2} M_{xy}b^h_y\bigg),\qquad\text{that is,}\qquad a^h_x = \frac{\sum_{y=1}^{m_2} M_{xy}b^h_y}{\left\|\sum_{y=1}^{m_2} M_{xy}b^h_y\right\|},
  \label{eqn:ah}
\end{equation}
and we repeat \cref{eqn:bh,eqn:ah} until the objective value in \cref{eqn:SDPdM} stops increasing, up to numerical precision when $n>1$.
Depending on the initial choice of $a^h_x$, the value attained will vary.
Therefore we restart the procedure a large number of times: from a few hundreds or thousands within the FW algorithm, to $10^5$ in \cref{tab:sym_n2} and $10^8$ in \cref{tab:sym}.

In the following, we clearly mention when the last part of our method --- the computation of $\sdp{1}{M}$ (or $\sdp{2}{M}$ in \cref{sec:KGd2}) where $M$ is the last gradient return by our FW algorithm --- is done with the heuristic procedure just described, which comes with no theoretical guarantee, or with a more elaborate solver for which the exact value can be certified or rigourously upper bounded.

\begin{figure}[ht!]
  \centering
  \subfloat[][]{
    \includegraphics[page=1]{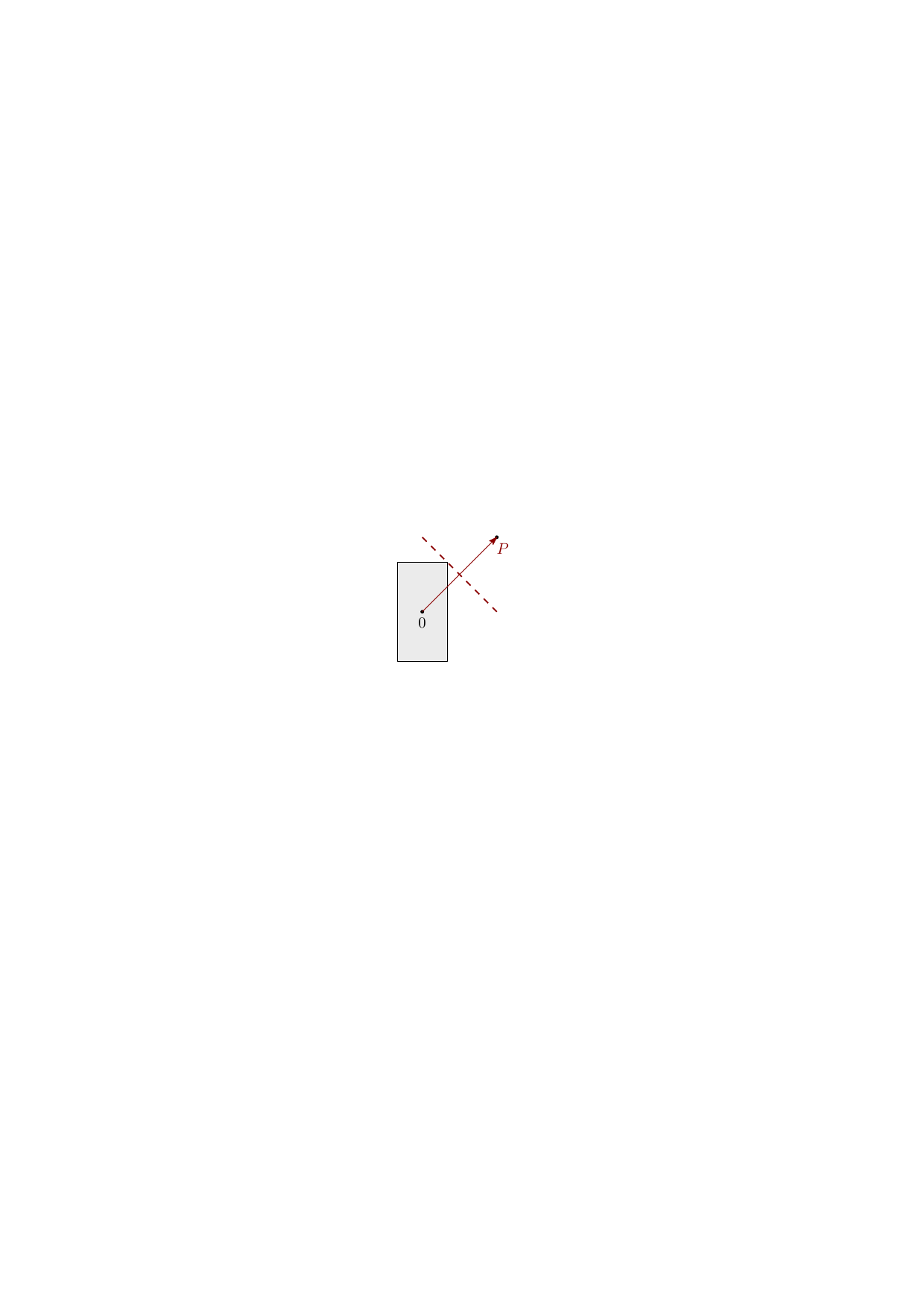}
    \label{fig1a}
  }
  \subfloat[][]{
    \includegraphics[page=2]{fig1.pdf}
    \label{fig1b}
  }
  \subfloat[][]{
    \includegraphics[page=3]{fig1.pdf}
    \label{fig1c}
  }
  \subfloat[][]{
    \includegraphics[page=4]{fig1.pdf}
    \label{fig1d}
  }
  \caption{
    Illustration of our projection method to separate a matrix $P\in\cut{d}{}$ from the correlation polytope $\cut{1}{}$.
    (a) In all cases that we consider in this work, the direction $P$ already defines, together with the corresponding value of $\sdp{1}{P}$, a separating hyperplane.
    (b) When running our FW algorithm to minimise the distance between $P$ and $\cut{1}{}$, the orthogonal projection obtained does not, in general, lie on a facet.
    The resulting separating hyperplane $M$ is better than $P$ from (a), but still not optimal.
    Note that this situation can also happen when starting from a point $vP$ on the line $[0,P]$ that is not close enough to $\cut{1}{}$ (not depicted here).
    (c) Even when $v$ is chosen small enough, if our FW algorithm has not fully converged to the orthogonal projection of $vP$, the separating hyperplane $M$ slightly deviates from a facet.
    (d) For a small enough $v$ and when our FW algorithm has converged, the resulting separating hyperplane is a facet, denoted $A$, that is, the tightest separation leading to the best value of $\innp{A}{P}/\sdp{1}{A}$.
  }
  \label{fig1}
\end{figure}

\begin{algorithm}[H]
  \caption{Iterative procedure to derive a facet via the Blended Pairwise Conditional Gradient (BPCG) algorithm}
  \label{alg:facet}
  \begin{algorithmic}[1]
    \State Input: $P\in\cut{d}{m_1,m_2}$
    \State $v\gets1$ \Comment{$P$ assumed outside of $\cut{1}{m_1,m_2}$, see \cref{fig1a}}
    \State $\mathcal{S}\gets\varnothing$ \Comment{empty active set to start with}
    \While{$\codim\mathcal{S}>1$} \Comment{dimension of the subspace generated by the active set}
      \State $X,\mathcal{S}\gets\mathrm{BPCG}(vP)$ \Comment{$X$ is the projection of $vP$ onto $\cut{1}{m_1,m_2}$ obtained with Algorithm~2 from~\cite{DIB+23}}
      \State $M\gets vP-X$ \Comment{direction of the separating hyperplane given by the final gradient}
      \State $v\gets\innp{M}{P}/\sdp{1}{M}$ \Comment{new point on the line $[0,P]$, strictly closer to $\cut{1}{m_1,m_2}$ than the previous one}
    \EndWhile \Comment{$\codim\mathcal{S}=1$, the ambient dimension may be smaller than $m_1m_2$ by symmetry}
    \State Output: $A\perp\mathcal{S}$ \Comment{normal vector defining the facet $A$ of the symmetrisation of $\cut{1}{m_1,m2}$~\cite{BBB+12,DVP24}}
  \end{algorithmic}
\end{algorithm}

\subsection{Heuristic results with highly symmetric line packings}
\label{sec:sym}

Starting from a good point $P\in\cut{d}{}$ in the procedure described above is key to obtaining good bounds on $\KG{d}{}$.
Following~\cite{DBV17}, we observed that spanning the $d$-dimensional projective unit sphere in the most uniform way seems to be favourable, which served as our guiding principle to come up with structures in higher dimensions.
In this \cref{sec:sym}, we use the same distribution of points on both sides in \cref{eqn:cut}, that is, $m_1=m_2=m$ and $a_1=b_1,\ldots a_m=b_m$; the matrix $P$ is therefore a Gram matrix.

The intuitive but vague notion of uniform spreading on the sphere can be formally seen as the problem of finding good line packings~\cite{CHS96,FJM18}.
In \cref{tab:sym}, for various symmetric $d$-dimensional line packings, we give the values of $\sdp{d}{P}/\sdp{1}{P}$ and $\sdp{d}{A}/\sdp{1}{A}$.
Since the Gram matrix $P$ is positive semidefinite, the ratio $\sdp{d}{P}/\sdp{1}{P}$ is upper bounded by $K_G^\succeq(d)$, the positive semidefinite Grothendieck constant of order $d$~\cite{BOV10}, whose value is known to be $\gamma(d)\pi/2$~\cite[Theorem~4.1.3]{Bri11}, where $\gamma(d)$ is defined in \cref{eqn:gamma} below.
Note that, for $d\geq9$, this bound is the current best known lower bound on $\KG{d}{}$ which is naturally greater than $K_G^\succeq(d)$.

Among the structures that we present in \cref{tab:sym}, the $d$-dimensional configurations reaching the best known kissing number in dimension $d$ are of particular interest.
Recall that it corresponds to the greatest number of non-overlapping unit spheres that can be placed such that they each touch (``kiss'') a common unit sphere.
In dimension $d=3$, different realisations of the kissing number give rise to different values of $\sdp{3}{A}/\sdp{1}{A}$.
As the icosahedron is an optimal line packing, this indicates that giving good bounds on $\KG{d}{}$ does not, in general, boil down to finding good line packings.
Note that the unicity of the configuration in dimension $d=4$ was recently proven~\cite{LLM24}.

Upon inspection of \cref{tab:sym} it is apparent that, in the small dimensions that we consider in this work ($d\leq8$), root systems play an important role in obtaining good kissing configurations.
These structures are inherently symmetric and play an important role in the theory of Lie groups and Lie algebras.
We refer to~\cite{Bak02} for definitions and elementary properties.
The family $D_d$ has been studied in~\cite{FR94}; there, starting from the Gram matrix $P$ of size $d(d-1)$, the exact value of $\sdp{1}{P}$ is computed by invoking symmetry arguments.
Moreover, for $d\in\{3, 4, 5\}$, the optimal diagonal modification of $\lambda=\frac23$ is derived for $D_d$; in other words, \cite{FR94} shows that, among all matrices with shape $P-\lambda\id$, the matrix $P-\frac23\id$ is giving the best lower bound on $\KG{d}{}$.
Here we go a bit further by extending this result to $d\in\{6, 7, 8\}$, a fact that was suspected in~\cite{FR94}, but we also observe that the corresponding matrices $A=P-\frac23\id$ are actually facets of the symmetrised polytope $D_d(\cut{1}{d(d-1)})$.

In \cref{tab:sym}, we also give optimal diagonal modifications for other configurations.
For the root system $E_8$,
the value $45/31$ reached without diagonal modification is already provided in~\cite[page~50]{FR94}, where it is attributed to Reeds and Sloane.
It indeed follows from \cite[Lemma~2]{FR94} and from the transitivity of the Weyl group, see the acknowledgements in \cref{sec:acks}.
However, these arguments do not directly apply to solve the diagonally modified case.

Note that in all cases involving irrational numbers (except the icosahedron), the facet $A$ cannot be obtained via a diagonal modification of $P$.
This is because the off-diagonal elements of $P$ feature rational and irrational numbers.
The exception of the icosahedron is due to its extra property of being an equiangular tight frame (ETF), all off-diagonal elements being $\pm1/\sqrt5$, which can then lead to a facet by taking an irrational diagonal modification.

\begin{table}[ht!]
  \centering
  \begin{tabular}{|c|c|cl|cl|c|c|c|c|}
    \hline
    ~$d$~                & $m$                 & \multicolumn{2}{c|}{$\frac{\sdp{d}{P}}{\sdp{1}{P}}$}                               & \multicolumn{2}{c|}{$\frac{\sdp{d}{A}}{\sdp{1}{A}}\geq$}                              & Comments              & ~Kissing~    & $G$                     & $\lambda$                \\ \hline
    2                    & 3                   & $\frac98$                               & $=1.125$                                 & $\frac54$                                 & $=1.25\hphantom{00^*}$~\cite{DVP24}       & Hexagon               & $\checkmark$ & ~$A_2$~                 & $\frac23$                \\ \hline
                         &                     & $\frac{9-3\sqrt5}{2}$                   & $\approx1.1459\hphantom{^*}$~\cite{PV22} & $\frac{1+3\sqrt5}{6}$                     & $\approx1.2847$                           & Icosahedron           & $\checkmark$ & $H_3$                   & ~$\frac{15-\sqrt5}{15}$~ \\ \cline{3-10}
                         & \multirow{-2}{*}{6} & $\frac65$                               & $=1.2\hphantom{000^*}$~\cite{FR94}       & $\frac43$                                 & $\approx1.3333\hphantom{^*}$~\cite{FR94}  & Cuboctahedron         & $\checkmark$ & $A_3$                   & $\frac23$                \\ \cline{2-10}
                         & 10                  & $\frac{25(7-3\sqrt5)}{6}$               & $\approx1.2158\hphantom{^*}$~\cite{PV22} & $\frac{7+3\sqrt5}{10}$                    & $\approx1.3708$                           & Dodecahedron          &              &                         &                          \\ \cline{2-8}\cline{10-10}
    \multirow{-4}{*}{3}  & 15                  & $\frac{75(31-12\sqrt5)}{241}$           & $\approx1.2968\hphantom{^*}$             & $\frac{5\sqrt5}{8}$                       & $\approx1.3975$                           & ~Icosidodecahedron~   &              & \multirow{-2}{*}{$H_3$} &                          \\ \hline
                         & 12                  & $\frac97$                               & $\approx1.2857\hphantom{^*}$~\cite{FR94} & $\frac75$                                 & $=1.4\hphantom{000^*}$~\cite{FR94}        & 24-cell               & $\checkmark$ & $D_4$                   & $\frac23$                \\ \cline{2-10}
                         & 60                  & $\frac{225(83-36\sqrt5)}{409}$          & $\approx1.3762\hphantom{^*}$~\cite{PV22} & $\frac{100(16+15\sqrt5)}{3361}$           & $\approx1.4740$                           & 600-cell              &              &                         &                          \\ \cline{2-8}\cline{10-10}
    \multirow{-3}{*}{4}  & ~300~               & $\frac{225(1635-731\sqrt5)}{71}$        & $\approx1.3763\hphantom{^*}$~\cite{PV22} & \cc$\frac{3(\alpha+\beta\sqrt5)}{\gamma}$ & \cc$\approx1.4996^*$                      & 120-cell              &              & \multirow{-2}{*}{$H_4$} &                          \\ \hline
    5                    & 20                  & $\frac43$                               & $\approx1.3333\hphantom{^*}$~\cite{FR94} & $\frac{10}{7}$                            & $\approx1.4286\hphantom{^*}$~\cite{FR94}  &                       & $\checkmark$ & $D_5$                   & $\frac23$                \\ \hline
                         & 30                  & $\frac{15}{11}$                         & $\approx1.3636\hphantom{^*}$~\cite{FR94} & $\frac{13}{9}$                            & $\approx1.4444$                           &                       &              & $D_6$                   & $\frac23$                \\ \cline{2-10}
    \multirow{-2}{*}{6}  & 36                  & $\frac{18}{13}$                         & $\approx1.3846$                          & $\frac{16}{11}$                           & $\approx1.4545$                           &                       & $\checkmark$ & $E_6$                   & $\frac23$                \\ \hline
                         & 28                  & $\frac{28}{25}$                         & $=1.12$                                  & $\frac{133}{109}$                         & $\approx1.2202$                           & ETF~\cite{LS66}       &              &                         & $\frac{39}{24}$          \\ \cline{2-10}
                         & 42                  & $\frac{18}{13}$                         & $\approx1.3846\hphantom{^*}$~\cite{FR94} & $\frac{16}{11}$                           & $\approx1.4545$                           &                       &              & $D_7$                   & $\frac23$                \\ \cline{2-10}
                         & 63                  & $\frac{27}{19}$                         & $\approx1.4211$                          & \cc$\frac{2961}{1991}$                    & \cc$\approx1.4872$                        &                       & $\checkmark$ & $E_7$                   & $\frac76$                \\ \cline{2-10}
    \multirow{-4}{*}{7}  & 91                  & ~$\frac{3549(1433-604\sqrt3)}{959041}$~ & $\approx1.4315^*$                        & \cc~$\frac{24631+18216\sqrt3}{37463}$~    & \cc$\approx1.4997^*$                      & E7 + ETF~\cite{FJM18} &              &                         &                          \\ \hline
                         & 56                  & $\frac75$                               & $=1.4\hphantom{000^*}$~\cite{FR94}       & $\frac{19}{13}$                           & $\approx1.4615$                           &                       &              & $D_8$                   & $\frac23$                \\ \cline{2-10}
    \multirow{-2}{*}{8}  & 120                 & $\frac{45}{31}$                         & $\approx1.4516\hphantom{^*}$~\cite{FR94} & \cc$\frac{165}{109}$                      & \cc$\approx1.5138^*$                      &                       & $\checkmark$ & $E_8$                   & $\frac{13}{6}$           \\ \hline
  \end{tabular}
  \caption{
    Lower bounds on $\KG{d}{}$ obtained via root systems and other remarkable configurations with symmetry group $G$.
    Some structures achieve the best known kissing number in their dimension, which we indicate with $\checkmark$.
    The matrix $A$ is the facet separating the Gram matrix $P$ of the configuration from the symmetrised correlation polytope $G(\cut{1}{})$.
    When $A=P-\lambda\id$, we give the corresponding value of this \emph{diagonal modification}~\cite{FR94} in the last column.
    Shaded cells indicate potential improvements on the state of the art, but in most of these cases, the asterisk $^*$ indicates that our solution of $\sdp{1}{A}$ is heuristic, hence only putative, see \cref{tab:heuristic}.
    Only the bound on $\KG{7}{}$ obtained with the $E_7$ root system is both exact and good enough to beat the literature; below, by monotonicity of $\KG{d}{}$, we further improve this bound, see \cref{tab:exact}.
    For the 120-cell, we define $\alpha=2566372165103191$, $\beta=1178280120531798$, and $\gamma=10405220765436757$.
    Note that we are able to exactly obtain $\sdp{d}{P}$ as all these configurations satisfy the \emph{platonic} property from~\cite{PV22}.
    Moreover, we could numerically observe the tightness of our lower bound on $\sdp{d}{A}$ by computing the first level of the Lasserre hierarchy~\cite{Las01}, see \cref{sec:KGd2_low} below.
  }
  \label{tab:sym}
\end{table}

The 600-cell and the 120-cell have already been studied in~\cite{PV22}, where symmetry arguments are exploited to compute the value of $\sdp{1}{P}$, even for the $300\times300$ matrix arising from the 120-cell.
Interestingly, although the difference of value for $\sdp{4}{P}/\sdp{1}{P}$ is almost negligible, the facets that we obtain here ``activate'' the advantage of the 120-cell.
Also, we emphasise the advantage of our geometric approach for these structures: in~\cite{PV22}, the optimal diagonal modification of $23/9$ was indeed derived for the 600-cell, giving rise the the value of $35(-37+27\sqrt5)/569\approx1.4378$, which is significantly smaller than our result of about $1.4740$, see \cref{tab:sym}.
This is because we are not restricted to diagonal modifications in our algorithm, which gives a significant superiority for these configurations.

For some instances of $\sdp{1}{P}$ and $\sdp{1}{A}$, the size of the matrix is too large for numerical solvers to handle it.
The values presented with an asterisk in \cref{tab:sym} and summarised in \cref{tab:heuristic} are then obtained heuristically and cannot be considered final results until these values are confirmed to be optimal.
For this, the high symmetry of these instances, detrimental for our branch-and-bound algorithm and not exploited in available QUBO/MaxCut solvers, could play a crucial role.
We leave this open for further research.

\subsection{Exact results in asymmetric scenarios}
\label{sec:asy}

Now, we discuss how to address \cref{pb:qubo}, namely, the computation of
\begin{equation}
  \sdp{1}{M}=\max_{\hb{a_x=\pm1}{b_y=\pm1}}\sum_{x=1}^{m_1}\sum_{y=1}^{m_2} M_{xy}{a_x}{b_y}
  \label{eqn:qubo}
\end{equation}
In~\cite{DIB+23} this problem is reformulated into a Quadratic Unconstrained Binary Optimisation (QUBO) instance, which is then given to the solver QuBowl from~\cite{RKS23}.
The instance solved there involves $m_1=m_2=97$ and is bigger than the ones from~\cite{DBV17} (of maximal size $m_1=m_2=92$).
However, it is worth noticing that the solution of the exact instance solved in~\cite{DIB+23} stands out in the landscape of all binary variables.
More precisely, the exact solution is likely to be unique and its value is far above the other feasible points, so that the solver could efficiently exclude vast portions of the search space.
This empirical observation is supported by two facts: instances from later stages of convergence of the Frank-Wolfe method cannot be solved by QuBowl (even within ten times as much time), and the symmetric instance $A$ of size $63\times63$ obtained from the $E_7$ root system (see \cref{tab:sym}) cannot be solved by QuBowl either, although it is way smaller.

Given these difficulties, we consider other formulations exploiting the specificities of the problem.
In particular, the branch-and-bound algorithm from~\cite{DBV17} works by breaking the symmetry between the binary variables $a_x$ and $b_y$ in \cref{eqn:qubo}, fixing the latter to obtain:
\begin{equation}
  \sdp{1}{M}=\max_{a_x=\pm1}\sum_{y=1}^{m_2}\left|\sum_{x=1}^{m_1} M_{xy}{a_x}\right|,
  \label{eqn:bnb}
\end{equation}
which has half as many variables.
Importantly, the parameter $m_2$ plays a less critical role in the complexity of the resulting algorithm, which is indeed still exponential in $m_1$, but now linear in $m_2$.
This suggests to use rectangular matrices $M$ in our procedure, with a large $m_2$ to increase the achievable lower bound on $\KG{d}{}$ and a relatively small $m_1$ to maintain the possibility to solve \cref{pb:qubo}, that is, to compute $\sdp{1}{M}$.

Such rectangular matrices can be obtained by following the procedure described in \cref{sec:fw} starting from rectangular matrices $P\in\cut{d}{m_1,m_2}$.
Good starting points are still obtained by using well spread distribution on the sphere, but these distributions $a_1,\ldots,a_{m_1}$ and $b_1,\ldots,b_{m_2}$ are now different, so that the matrix $P$ with entries $\innp{a_x}{b_y}$ is not a Gram matrix any more.
We give our best lower bounds on $\KG{d}{}$ in \cref{tab:exact} and present relevant details below.
All matrices can be found in the supplementary files~\footnote{Supplementary files can be found on Zenodo: \url{https://doi.org/10.5281/zenodo.13693164}.}.

In dimension $d=3$, despite our efforts, we could not find a rectangular instance beating the $97\times97$ from~\cite{DIB+23}.
However, the quantum point provided therein, namely, the polyhedron on the Bloch sphere consisting of $97$ pairs of antipodal points, is not the best quantum strategy to violate the inequality.
By using the Lasserre hierarchy at its first level, we can indeed obtain a slightly better violation, hence a tighter lower bound on $\KG{3}{}$.
Summarising, with the matrix $M$ from~\cite{DIB+23} that had been constructed starting from a Gram matrix $P\in\cut{3}{97}$, we have:
\begin{itemize}
  \item $\sdp{3}{M}\geq\innp{M}{P}\approx2.0000\times10^{22}$,
  \item $\sdp{3}{M}\geq\innp{M}{P'}\approx2.0001\times10^{22}$ where $P'\in\cut{3}{97}$ is obtained at the first level of the Lasserre hierarchy (rational expression in the supplementary file~\cite{Note1}),
  \item $\sdp{1}{M}=13921227005628453160441$ (solved with QuBowl~\cite{RKS23}).
\end{itemize}

In dimension $d=4$, we start with the 600-cell on one side and use the compound formed by the same 600-cell together with its dual (the 120-cell) on the other side.
This creates a matrix $P\in\cut{4}{60,360}$ we can run our separation procedure on.
Similarly to~\cite{DVP24}, we exploit the symmetry of $P$ throughout the algorithm to reduce the dimension of the space and accelerate the convergence.
The resulting matrix $A$ already has integer coefficients and satisfies:
\begin{itemize}
  \item $\sdp{4}{A}\geq\innp{A}{P}=30\left(227668+322725\sqrt2+170064\sqrt5+182375\sqrt{10}\right)$,
  \item the first level of the Lasserre hierarchy numerically confirms this value as optimal,
  \item $\sdp{1}{A}=33135128$ (solved with the branch-and-bound algorithm from~\cite{DBV17}).
\end{itemize}

In dimension $d=5$, we make use of structures in \href{http://neilsloane.com/grass/}{Sloane's database}~\cite{CHS96}.
On one side, we directly use their five-dimensional structure with 65 lines.
On the other side, we take their five-dimensional structure with 37 lines and augment it by adding the center of all edges, properly renormalised to lie on the sphere, which creates a five-dimensional structure with 385 lines.
The resulting matrix $P\in\cut{5}{65,385}$ goes through our separation algorithm until a satisfactory precision is attained.
After rounding we obtain a matrix $M$ satisfying:
\begin{itemize}
  \item $\sdp{5}{M}\geq\innp{M}{P}\approx2.1061\times10^8$,
  \item $\sdp{5}{M}\geq\innp{M}{P'}\approx2.1068\times10^8$ where $P'\in\cut{5}{65,385}$ is obtained at the first level of the Lasserre hierarchy (rational expression in the supplementary files~\cite{Note1}),
  \item $\sdp{1}{M}=141074623$ (solved with the branch-and-bound algorithm from~\cite{DBV17}).
\end{itemize}

\begin{table}[ht!]
  \centering
  \subfloat[][Heuristic]{
    \begin{tabular}{|c|c|c|c|c|}
      \hline
      ~$d$~ & SoA                         & ~$m_1$~ & ~$m_2$~ & ~$\KG{d}{}\geq$~ \\ \hline
      4     &                             & 300     & 300     & 1.49956          \\ \cline{1-1}\cline{3-5}
      7     &                             & 91      & 91      & 1.49967          \\ \cline{1-1}\cline{3-5}
      8     & \multirow{-3}{*}{~1.48217~} & 120     & 120     & 1.51376          \\ \hline
    \end{tabular}
    \label{tab:heuristic}
  }
  \hspace{1cm}
  \subfloat[][Exact]{
    \begin{tabular}{|c|c|c|c|c|}
      \hline
      ~$d$~ & SoA                       & ~$m_1$~ & ~$m_2$~ & ~$\KG{d}{}\geq$~ \\ \hline
      3     & ~1.43665~                 & 97      & 97      & 1.43670          \\ \hline
      4     &                           & 60      & 360     & 1.48579          \\ \cline{1-1}\cline{3-5}
      5     & \multirow{-2}{*}{1.48217} & 65      & 385     & 1.49339          \\ \hline
    \end{tabular}
    \label{tab:exact}
  }
  \caption{
    Better lower bounds on $\KG{d}{}$ compared to the state of the art (SoA) from~\cite{DIB+23,DBV17}.
    They are obtained by choosing two line structures in dimension $d$ defining a scalar product matrix $P\in\cut{d}{m_1,m_2}$, and then by running our Frank-Wolfe method to obtain a good separating hyperplane $M$ with respect to the correlation polytope $\cut{1}{m_1,m_2}$ (sometimes even a facet), see \cref{sec:fw}.
    Eventually, we compute the first level of the Lasserre hierarchy to either numerically confirm the optimality of $P$ in \cref{eqn:SDPdM_cut} or build a slightly better matrix $P'\in\cut{d}{m_1,m_2}$.
    In \cref{tab:exact}, we could solve $\sdp{1}{M}$ and $\KG{d}{}\geq\sdp{d}{M}/\sdp{1}{M}$ immediately follows from \cref{eqn:KGdn}.
    In \cref{tab:heuristic}, we could not solve $\sdp{1}{M}$ so that the bounds shown are heuristic; see \cref{tab:sym} for analytical values.
    Note that the correct value of the SoA for $\KG{4}{}$ in~\cite{DBV17} is taken from Section~III~B therein and not from the erroneous abstract.
    By monotonicity, our bound on $\KG{5}{}$ propagates to all $\KG{d}{}$ for $d\geq5$; it therefore beats the best bounds on Grothendieck constants of order up to $d=9$, where $\KG{9}{}\geq1.48608$ comes from~\cite{BBT11}.
    Moreover, it also beats the value $\KG{7}{}\geq1.48719$ that we had found with the $E_7$ root system in \cref{tab:sym}.
  }
  \label{tab:asy}
\end{table}

\section{Bounds on \texorpdfstring{$\KG{d}{2}$}{KG(d->2)}}
\label{sec:KGd2}

We now turn to the generalised Grothendieck constants $\KG{d}{2}$, which have not received a lot of attention so far.
Motivated by the quantum interpretation of these constants (see \cref{sec:quantum}), we extend the techniques presented above and in previous works to refine the bounds known on their values.

\subsection{Bounds arising from the literature}

There is an explicit lower bound on $\KG{d}{n}$ for any $d>n$ that uses a particular one-parameter family of linear functionals of infinite size.
This bound, initially proven in~\cite{VP09,BBT11} (see also~\cite[Theorem~3.3.1]{Bri11} for the proof), reads
\begin{equation}
  \KG{d}{n}\geq\frac{\gamma(d)}{\gamma(n)},\quad\text{with}\quad\gamma(d)=\frac{2}{d}\left(\frac{\Gamma\left(\frac{d+1}{2}\right)}{\Gamma\left(\frac{d}{2}\right)}\right)^2,
  \label{eqn:gamma}
\end{equation}
where $\Gamma(z)=\int_0^\infty t^{z-1}\be^{-t}\bd t$ is the gamma function.
When $n=2$, $\gamma(n)=\pi/4$ and \cref{eqn:gamma} can be rewritten:
\begin{equation}
  \KG{d}{2}\geq\frac{d\binom{d-1}{k}^2}{2^{2d-3}}\quad\text{when}\quad d=2k\qquad\text{and}\qquad\KG{d}{2}\geq\frac{2^{2d+1}}{d\binom{d-1}{k}^2\pi^2}\quad\text{when}\quad d=2k+1.
  \label{eqn:SoA_n2}
\end{equation}

The bounds in \cref{eqn:SoA_n2} are better than indirect bounds derived from natural combinations of bounds on the Grothendieck constants $\KG{d}{}$, even when considering the improved bounds proven in this article.
Starting with the Grothendieck inequality~\eqref{eqn:KGdn}, dividing both sides with $\sdp{1}{M}$ and lower and upper bounding the left and right sides, we indeed obtain the following relation
\begin{equation}
  \KG{d}{}\leq \KG{d}{n}\KG{n}{}.
  \label{eqn:multip}
\end{equation}
Then, inserting $\KG{2}{}=\sqrt2$ from~\cite{Kri79} and using our lower bounds exposed in \cref{tab:exact} yields $\KG{3}{2}\geq1.0159$, $\KG{4}{2}\geq1.0506$, and $\KG{5}{2}\geq1.0560$, which does not improve on the values from \cref{eqn:SoA_n2}.

As can be seen in \cref{tab:literature_n2}, \cref{eqn:SoA_n2} also outperforms all known bounds obtained with finite matrices.
Therefore, to the best of our knowledge, \cref{eqn:SoA_n2} represents the state of the art.

\begin{table}[ht!]
  \centering
  \begin{tabular}{|c|cl|c|c|c|c|c|c|}
    \hline
    ~$d$~                & \multicolumn{2}{c|}{SoA}                                                    & ~$m_1$~ & ~$m_2$~ & ~$\sdp{d}{M}\geq$~ & ~$\sdp{2}{M}\leq$~ & ~$\KG{d}{2}\geq$~ & Reference     \\ \hline
                         &                                        &                                    & 3       & 6       & $6\sqrt2$          & $3(1+\sqrt3)$      & 1.0353            & \cite{BSC+18} \\ \cline{4-9}
                         &                                        &                                    & 4       & 8       & 15.4548            & 14.8098            & 1.0436            & \cite{VP08}   \\ \cline{4-9}
     \multirow{-3}{*}{3} & \multirow{-3}{*}{~$\frac{32}{3\pi^2}$} & \multirow{-3}{*}{$\approx1.0808$~} & 3       & 4       & $4\sqrt3$          & $2(1+\sqrt5)$      & 1.0705            & \cite{Gis09}  \\ \hline
     4                   & $\frac98$                              & $=1.125\hphantom{0}$               & 4       & 8       & 16                 & 14.8098            & 1.0804            & \cite{VP08}   \\ \hline
  \end{tabular}
  \caption{
    Lower bounds on $\KG{d}{2}$ found in the literature when considering finite matrices and compared with the state of the art proven by using ``infinite matrices'' in~\cite{VP09,BBT11} and given in \cref{eqn:SoA_n2}.
  }
  \label{tab:literature_n2}
\end{table}

As far as we authors know, there are no nontrivial upper bounds on $\KG{d}{n}$ readily available in the literature.
The higher complexity of finding such bounds is partly explained by the natural direction imposed by the definition of $\KG{d}{n}$ as a supremum, see \cref{eqn:KGdn}.
In \cref{sec:up_n2} below, we give a general algorithmic method to overcome this difficulty.

\subsection{Numerical lower bounds on \texorpdfstring{$\KG{d}{2}$}{KG(d->2)}}
\label{sec:KGd2_low}

Similarly to \cref{sec:KGd_low}, providing a matrix $M$ together with an upper bound on $\sdp{2}{M}$ and a lower bound on $\sdp{d}{M}$ suffices to obtain a lower bound on $\KG{d}{2}$.
However, solving the optimisation problem $\sdp{2}{M}$ is even harder than $\sdp{1}{M}$: it is also nonlinear and nonconvex, but with continuous variables instead of binary ones.
In this section we explain how we obtain numerical upper bounds on $\sdp{2}{M}$ and summarise our results in \cref{tab:sym_n2}.

We first rewrite \cref{eqn:SDPdM} by introducing the (real) coordinates of the $d$-dimensional vectors $a_x$ and $b_y$:
\begin{equation}
  \sdp{d}{M}=\max\left\{\sum_{x=1}^{m_1}\sum_{y=1}^{m_2} M_{xy}\innp{\begin{pmatrix}a_{x,1}\\\vdots\\a_{x,d}\end{pmatrix}}{\begin{pmatrix}b_{y,1}\\\vdots\\b_{y,d}\end{pmatrix}}~\Bigg|~\forall x\in[m_1],~\sum_{i=1}^da_{x,i}^2=1,~\forall y\in[m_2],~\sum_{i=1}^db_{y,i}^2=1\right\}.
\end{equation}
With this reformulation, we see that obtaining upper bounds on $\sdp{d}{M}$ amounts to solving the Lasserre hierarchy of this polynomial system with $d(m_1+m_2)$ variables and $m_1+m_2$ constraints at a certain level~\cite{Las01}.
At the first level, this method is used in \cref{tab:sym} to numerically confirm the value of $\sdp{d}{A}$, as the upper bound computed matches, up to numerical accuracy, our analytical lower bound.
In the following, we use it at the second level and for $n=2$ to numerically obtain upper bounds on $\sdp{2}{M}$.

Similarly to \cref{tab:sym}, the different values that we reach are given in \cref{tab:sym_n2}.
The solution to the Lasserre hierarchy was obtained with the implementation in~\cite{WML21a}.
Importantly, these results suffer from numerical imprecision.
This is all the more relevant here because of the huge size of most instances, making the use of a first-order solver, COSMO~\cite{GCG21} in our case, almost mandatory, which is detrimental to the precision of the optimum returned.
This explains the deviations observed in \cref{tab:sym_n2}: the fourth digit of the bound computed via the Lasserre hierarchy is not significant.

Interestingly, the icosahedron now provides us with a better bound than the cuboctahedron, whereas this was the opposite in \cref{tab:sym}.
The dodecahedron also gives no advantage over the icosahedron, up to numerical precision at least.
Similarly, the 600-cell and the 120-cell give very close bounds, which is in strong contrast with \cref{tab:sym}.

Note that the bounds presented in \cref{tab:sym_n2} allow us to prove that \cref{eqn:multip} is strict for $n=2$ and $d=3$, a fact that could not be verified with \cref{eqn:SoA_n2}.
In this case, we indeed have that $\KG{3}{}\leq1.455$ from~\cite{DIB+23} and that $\KG{2}{}\KG{3}{2}\geq\sqrt2\times1.103\approx1.560$ from~\cite{Kri79} and with the bound given in \cref{tab:sym_n2}.

\begin{table}[ht!]
  \centering
  \begin{tabular}{|c|cl|c|c|c|c|c|c|c|}
    \hline
    ~$d$~               & \multicolumn{2}{c|}{SoA}                                                            & $m$                 & ~Heuristic~ & ~Lasserre~ & Comments              & ~Kissing~    & $G$                     & $\lambda$ \\ \hline
                        &                                            &                                        &                     & 1.0560      & 1.0560     & Cuboctahedron         & $\checkmark$ & ~$A_3$~                 & ~0.9209~  \\ \cline{5-10}
                        &                                            &                                        & \multirow{-2}{*}{6} & \cc1.0983   & \cc1.0983  & Icosahedron           & $\checkmark$ &                         & 0.8252    \\ \cline{4-8}\cline{10-10}
                        &                                            &                                        & 10                  & \cc1.0983   & \cc1.0983  & Dodecahedron          &              &                         &           \\ \cline{4-8}\cline{10-10}
    \multirow{-4}{*}{3} & \multirow{-4}{*}{$\frac{32}{3\pi^2}$}      & \multirow{-4}{*}{$\approx1.0808$}      & 15                  & \cc1.1027   & \cc1.1028  & ~Icosidodecahedron~   &              & \multirow{-3}{*}{$H_3$} &           \\ \hline
                        &                                            &                                        & 12                  & 1.1181      & 1.1182     & 24-cell               & $\checkmark$ & $D_4$                   & 1.0981    \\ \cline{4-10}
                        &                                            &                                        & 60                  & \cc1.1704   &            & 600-cell              &              &                         &           \\ \cline{4-8}\cline{10-10}
    \multirow{-3}{*}{4} & \multirow{-3}{*}{$\frac98$}                & \multirow{-3}{*}{$=1.125\hphantom{0}$} & ~300~               & 1.1741      &            & 120-cell              &              & \multirow{-2}{*}{$H_4$} &           \\ \hline
    5                   & $\frac{512}{45\pi^2}$                      & $\approx1.1528$                        & 20                  & \cc1.1540   & 1.1469     &                       & $\checkmark$ & $D_5$                   & 1.2654    \\ \hline
                        &                                            &                                        & 30                  & 1.1718      & 1.1631     &                       &              & $D_6$                   & 1.4402    \\ \cline{4-10}
    \multirow{-2}{*}{6} & \multirow{-2}{*}{$\frac{75}{64}$}          & \multirow{-2}{*}{$\approx1.1719$}      & 36                  & \cc1.1846   &            &                       & $\checkmark$ & $E_6$                   & 1.4677    \\ \hline
                        &                                            &                                        & 28                  & 1.1435      & 1.1266     & ETF~\cite{LS66}       &              &                         & 1.5738    \\ \cline{4-10}
                        &                                            &                                        & 42                  & 1.1830      &            &                       &              & $D_7$                   & 1.5481    \\ \cline{4-10}
                        &                                            &                                        & 63                  & \cc1.2060   &            &                       & $\checkmark$ & $E_7$                   & 1.8025    \\ \cline{4-10}
    \multirow{-4}{*}{7} & \multirow{-4}{*}{~$\frac{2048}{175\pi^2}$} & \multirow{-4}{*}{$\approx1.1857$~}     & 91                  & \cc1.2139   &            & E7 + ETF~\cite{FJM18} &              &                         &           \\ \hline
                        &                                            &                                        & 56                  & 1.1913      &            &                       &              & $D_8$                   & 1.6380    \\ \cline{4-10}
    \multirow{-2}{*}{8} & \multirow{-2}{*}{$\frac{1225}{1024}$}      & \multirow{-2}{*}{$\approx1.1963$}      & 120                 & \cc1.2251   &            &                       & $\checkmark$ & $E_8$                   & 3.0176    \\ \hline
  \end{tabular}
  \caption{
    Lower bounds on $\KG{d}{2}$ obtained via root systems and other remarkable configurations with symmetry group $G$.
    ``Heuristic'' and ``Lasserre'' refer to the way we use to derive a lower bound on $\sdp{d}{M}/\sdp{2}{M}$.
    The matrix $M$ is computed by separating the Gram matrix $P$ of the configuration from the symmetrisation of $\cut{2}{}$.
    When $M=P-\lambda\id$, we give the corresponding value of this \emph{diagonal modification}~\cite{FR94} in the last column.
    Shaded cells indicate (numerical) improvements on the state of the art from \cref{eqn:SoA_n2}, shown on the left for comparison.
    Note that the Lasserre hierarchy~\cite{Las01}, up to the numerical inaccuracy emphasised in the main text and giving rise to small inconsistencies between the ``Heuristic'' and ``Lasserre'' columns, provides us with a certified bound.
    Here we solve it at its second level, except for $m=6$, in which case we reach the third level.
  }
  \label{tab:sym_n2}
\end{table}

\subsection{Exact upper bound on \texorpdfstring{$\KG{3}{2}$}{KG(3->2)}}
\label{sec:up_n2}

Here we reformulate the procedure first introduced in~\cite{CGRS16,HQV+16} and later used to obtain better upper bounds on $\KG{3}{}$ in~\cite{HQV+17,DIB+23} in a way that makes the generalisation to other Grothendieck constants more transparent.

\begin{proposition}
  Let $P\in\cut{d}{p_1,p_2}$ with underlying vectors $a_1^P,\ldots,a_{p_1}^P,b_1^P,\ldots,b_{p_2}^P\in S^{d-1}$ such that
  \begin{itemize}
    \item there exist $\eta_1,\eta_2\in(0,1)$ for which $\eta_1S^{d-1}\subset\conv\{a_i^P\}_{i=1}^{p_1}$ and $\eta_2S^{d-1}\subset\conv\{b_j^P\}_{j=1}^{p_2}$,
    \item there exists $\alpha\in(0,1)$ such that $\alpha P\in\cut{n}{p_1,p_2}$.
  \end{itemize}
  Then we have the following bound:
  \begin{equation}
    \KG{d}{n}\leq\frac{1}{\alpha\,\eta_1\eta_2}.
    \label{eqn:upper}
  \end{equation}
  \label{prop:upper}
\end{proposition}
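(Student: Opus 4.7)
The plan is to show that $\sdp{d}{M}\leq\frac{1}{\alpha\eta_A\eta_B}\sdp{n}{M}$ for every real matrix $M$, which by the definition~\eqref{eqn:KGdn} immediately yields the stated bound on $\KG{d}{n}$. The core idea is to use the vectors $a_x,b_y$ as a ``dictionary'' to translate an optimal $d$-dimensional strategy for $M$ into a strategy whose quality is captured by $P$, then use $\alpha P\in\cut{n}{m_1,m_2}$ to decompose it into $n$-dimensional strategies.

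First, I would fix an arbitrary $M\in\RR^{m_1'\times m_2'}$ together with optimal unit vectors $u_{x'},v_{y'}\in S^{d-1}$ realising $\sdp{d}{M}$. Using the hypothesis $\eta_A S^{d-1}\subset\conv\{a_x\}_{x=1}^{m_1}$, for each $x'\in[m_1']$ I choose probabilities $p^{(x')}_x\geq0$ summing to one such that $\eta_A u_{x'}=\sum_x p^{(x')}_x a_x$, and analogously I pick $q^{(y')}_y\geq0$ with $\eta_B v_{y'}=\sum_y q^{(y')}_y b_y$. Expanding the inner product gives
\begin{equation}
  \eta_A\eta_B\innp{u_{x'}}{v_{y'}}=\sum_{x,y} p^{(x')}_x q^{(y')}_y P_{xy},
\end{equation}
so that, defining $\tilde M_{xy}=\sum_{x',y'} M_{x'y'}p^{(x')}_x q^{(y')}_y\in\RR^{m_1\times m_2}$, one obtains $\eta_A\eta_B\,\sdp{d}{M}=\innp{\tilde M}{P}=\frac{1}{\alpha}\innp{\tilde M}{\alpha P}$.

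Next, I would invoke $\alpha P\in\cut{n}{m_1,m_2}$, writing $\alpha P=\sum_k\lambda_k X^{(k)}$ as a convex combination of Gram matrices $X^{(k)}_{xy}=\innp{c^{(k)}_x}{d^{(k)}_y}$ with $c^{(k)}_x,d^{(k)}_y\in S^{n-1}$. Substituting back and exchanging sums produces
\begin{equation}
  \innp{\tilde M}{\alpha P}=\sum_k\lambda_k\sum_{x',y'} M_{x'y'}\innp{e^{(k)}_{x'}}{f^{(k)}_{y'}},\quad e^{(k)}_{x'}=\sum_x p^{(x')}_x c^{(k)}_x,\quad f^{(k)}_{y'}=\sum_y q^{(y')}_y d^{(k)}_y,
\end{equation}
so that $e^{(k)}_{x'},f^{(k)}_{y'}$ lie in the closed unit ball of $\RR^n$ as convex combinations of unit vectors. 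I would then observe that $\sum_{x',y'}M_{x'y'}\innp{e_{x'}}{f_{y'}}$ is multilinear in each $e_{x'}$ and each $f_{y'}$ separately: by linearity the maximum over the ball is attained on the sphere, so this inner sum is bounded by $\sdp{n}{M}$ for every $k$. Combining with $\sum_k\lambda_k=1$ yields $\alpha\eta_A\eta_B\,\sdp{d}{M}\leq\sdp{n}{M}$, which is the desired inequality.

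I do not expect any real obstacle: the main subtlety is the two-step relaxation (from unit vectors to convex combinations of unit vectors, which live in the ball, then back to the sphere via the multilinearity argument), but both are standard. The only thing to double-check is that the probabilities $p^{(x')}_x,q^{(y')}_y$ are guaranteed to exist from the inclusion hypothesis, which is exactly the definition of $\conv\{a_x\}$ containing $\eta_A S^{d-1}$, and that no issue arises when $M$ has arbitrary size $m_1'\times m_2'$ distinct from $m_1\times m_2$, which the construction of $\tilde M$ handles explicitly.
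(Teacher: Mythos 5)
Your proof is correct and takes essentially the same route as the paper's: the paper simply observes that the hypotheses imply $\alpha\,\eta_A\eta_B\,N\in\cut{n}{m_1,m_2}$ for every $N\in\cut{d}{m_1,m_2}$ and concludes from $\alpha\,\eta_A\eta_B~\sdp{d}{M}=\max_{N}\innp{M}{\alpha\,\eta_A\eta_B\,N}\leq\sdp{n}{M}$, and your convex-decomposition plus multilinearity argument is precisely the verification of that inclusion, written out in detail. A minor bonus of your version is that it explicitly handles matrices $M$ of arbitrary size $m_1'\times m_2'$, a point the paper leaves implicit.
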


\begin{proof}
  Consider a matrix $M$ of size $m_1\times m_2$.
  Since
  \begin{equation}
    \alpha\,\eta_1\eta_2\,\sdp{d}{M}=\alpha\,\eta_1\eta_2\max_{N\in\cut{d}{m_1,m_2}}\innp{M}{N}=\max_{N\in\cut{d}{m_1,m_2}}\innp{M}{\alpha\,\eta_1\eta_2\,N},
    \label{eqn:proof_upper1}
  \end{equation}
  we focus on $\alpha\,\eta_1\eta_2\,N$ for $N\in\cut{d}{m_1,m_2}$ with underlying vectors $a_1^N,\ldots,a_{m_1}^N,b_1^N,\ldots,b_{m_2}^N\in S^{d-1}$.

  By the first assumption on $\{a_i^P\}_{i=1}^{p_1}$ and $\{b_j^P\}_{j=1}^{p_2}$, we know that we can write, for $x\in[m_1]$ and $y\in[m_2]$,
  \begin{equation}
    \eta_1a_x^N=\sum_{i=1}^{p_1}v_i^{(x)}a_i^P\quad\text{and}\quad\eta_2b_y^N=\sum_{j=1}^{p_2}w_j^{(y)}b_j^P,\quad\text{so that}\quad\alpha\,\eta_1\eta_2\,N_{xy}=\alpha\sum_{i,j}v_i^{(x)}w_j^{(y)}\innp{a_i^P}{b_j^P}
  \end{equation}
  where $v_i^{(x)}\geq0$, $w_j^{(y)}\geq0$, $\sum_iv_i^{(x)}=1$, and $\sum_jw_j^{(y)}=1$.

  Next we use the second assumption, which can be written as follows: there exist positive weights $\lambda_k$ summing up to one and vectors $\ta_1^{(k)},\ldots,\ta_{p_1}^{(k)},\tb_1^{(k)},\ldots,\tb_{p_2}^{(k)}\in S^{n-1}$ indexed by $k$ (in a finite set) such that $\alpha\tinnp{a_i^P}{b_j^P}=\sum_k\lambda_k\tinnp{\ta_i^{(k)}}{\tb_j^{(k)}}$ holds for all $i\in[p_1]$ and $j\in[p_2]$.
  This gives, for $x\in[m_1]$ and $y\in[m_2]$,
  \begin{equation}
    \alpha\,\eta_1\eta_2\,N_{xy}=\sum_{i,j}v_i^{(x)}w_j^{(y)}\sum_k\lambda_k\innp{\ta_i^{(k)}}{\tb_j^{(k)}}=\sum_k\lambda_k\innp{\sum_iv_i^{(x)}\ta_i^{(k)}}{\sum_jw_j^{(y)}\ta_j^{(k)}}=\sum_k\lambda_k\innp{\ta_x^{(k)}}{\tb_y^{(k)}},
    \label{eqn:proof_upper2}
  \end{equation}
  where we have defined the vectors $\ta_x^{(k)}=\sum_iv_i^{(x)}\ta_i^{(k)}$ and $\tb_y^{(k)}=\sum_jw_j^{(y)}\ta_j^{(k)}$ in the $n$-dimensional unit ball $B^n$.

  Combining \cref{eqn:proof_upper1} with \cref{eqn:proof_upper2} and using the definition of $\sdp{d}{M}$ in \cref{eqn:SDPdM}, we obtain
  \begin{equation}
    \alpha\,\eta_1\eta_2\,\sdp{d}{M}\leq\max\left\{\sum_{x=1}^{m_1}\sum_{y=1}^{m_2} M_{xy}\innp{\ta_x}{\tb_y}~\Big|~\forall x\in[m_1],~\ta_x\in B^n,~\forall y\in[m_2],~\tb_y\in B^n\right\}\leq\sdp{n}{M},
  \end{equation}
  so that \cref{eqn:upper} follows from the definition of $\KG{d}{n}$.
\end{proof}

To obtain our bound on $\KG{3}{2}$, we start from the Gram matrix $P$ obtained from a centrosymmetric polyhedron with 912 vertices, that is, 406 lines.
Note that this polyhedron is different from the one used in~\cite{DIB+23} and features a slightly higher shrinking factor through the following construction: starting from an icosahedron, apply four times the transformation adding the normalised centers of each (triangular) facet.
Taking $v_0=0.8962$ and running our Frank-Wolfe algorithm, we obtain a decomposition using 7886 extreme points of $\cut{2}{406}$ and recovering $v_0P$ up to a Euclidean distance of $\varepsilon\approx2.7\times10^{-4}$.
This algorithm is strictly identical to the one in~\cite{DIB+23} except for the LMO, which corresponds to the case decribed in \cref{sec:fw} when $n=2$.
Following~\cite[Appendix~D]{DIB+23} we can make this decomposition purely analytical by noting that the unit ball for the Euclidean norm is contained in $\cut{1}{}$ and therefore in $\cut{2}{}$.
Note that, contrary to~\cite{DIB+23} where the extremal points of $\cut{1}{}$ were automatically analytical as they only have $\pm1$ elements, an extra step is needed here to make numerical extremal points of $\cut{2}{}$ analytical.
Here we follow a rationalisation procedure very similar to the one described in~\cite[Appendix~A]{DIB+23} and we obtain a value $\alpha=v_0/(1+\varepsilon)$ that can be used in \cref{prop:upper} to derive the bound
\begin{equation}
  \KG{3}{2}\leq1.1233\ldots
  \label{eqn:lowerKG32}
\end{equation}
whose analytical expression is provided in the supplementary files~\cite{Note1}.
This upper bound is indeed entirely rigourous, contrary to the lower bounds presented above in \cref{sec:KGd2_low}, which rely on numerical methods that we did not convert into exact results.

\subsection{Implications for Bell nonlocality}
\label{sec:quantum}

The connection between $\KG{d}{}$ and Bell nonlocality has first been established by Tsirelson~\cite{Tsi87} and was later investigated more thoroughly in~\cite{AGT06}.
We refer to the appendix of~\cite{VP08} for a detailed construction of the quantum realisation of the Bell inequalities constructed in our work.
Note that the new lower bound mentioned in \cref{sec:asy} slightly improves on the upper bound on $v_c^\mathrm{Wer}=1/\KG{3}{}$, namely, it reduces it from about $0.69606$ to $0.69604$; see also~\cite{DIB+23} for the definition of this number and its interpretation as a noise robustness.

Works anticipating this connection with $\KG{d}{2}$ go back to 2008~\cite{PV08,VP08}, when Vértesi and Pál investigated the realisation of Bell inequalities in real quantum mechanics and showed that $d$-dimensional complex systems can outperform real ones.
This line of research was further developed by Briët et al.~\cite{BBT11}.

We refer to~\cite{VP08} for the explicit connection; for this, the examination of \cref{tab:literature_n2} should be of some help as it gives the bridge between their notations and ours.

\section{Discussion}
\label{sec:discussion}

We generalised the method used in~\cite{DIB+23} to obtain lower bounds on $\KG{3}{}$ to Grothendieck constant of higher order $\KG{d}{}$.
We obtained certified lower bounds on $\KG{3}{}$, $\KG{4}{}$, and $\KG{5}{}$ beating previous ones, and setting, by monotonicity, the best known lower bounds on $\KG{d}{}$ for $3\leq d\leq9$.
For these constants of higher orders, we also showed how our Frank-Wolfe algorithm can be used to derive putative facets, but solving the corresponding quadratic binary problems remains open.
The instances given in this article would allow immediate improvement on $\KG{4}{}$, $\KG{7}{}$, and $\KG{8}{}$.
We expect future work to exploit their symmetry to make the computation possible.

Beyond the perhaps anecdotal improvement brought to their respective constants, developing such tools could unlock access to higher-dimensional structures with a size way larger than anything that numerical methods could ever consider solving.
This could in turn give rise to lower bounds on finite-order Grothendieck constants strong enough to compete with the best known lower bound on \emph{the} (infinite-order) Grothendieck constant, namely, the one presented by Davie in 1984~\cite{Dav84} and independently rediscovered by Reeds in 1991~\cite{Ree91}, both works being unpublished.
This bound reads
\begin{equation}
  K_G\geq\sup_{0<\lambda<1}\frac{1-\rho(\lambda)}{\max\{\rho(\lambda),F_{\rho(\lambda)}(\lambda)\}}\approx1.676956674\ldots,
  \quad\text{attained for}\quad
  \lambda\approx0.255730213\ldots
  \label{eqn:infinite}
\end{equation}
where
\begin{equation}
  \rho(\lambda)=\sqrt{\frac{2}{\pi}}\lambda\be^{-\frac{\hphantom{^2\!\!}\lambda^2\!\!}{2}}
  \quad\text{and}\quad
  F_\rho(\lambda)=\frac{2}{\pi}\be^{-\lambda^2}+\rho\left(1-2\sqrt{\frac{2}{\pi}}\int_\lambda^\infty\be^{-\frac{\hphantom{^2\!\!}x^2\!\!}{2}}\bd x\right).
\end{equation}

We also extended the procedure of~\cite{DIB+23} to derive bounds on the generalised constant $\KG{d}{2}$.
These constants appear quite naturally when considering the advantage of quantum mechanics over real quantum mechanics.
In particular, our analytical upper bound on $\KG{3}{2}$ delivers some quantitative insight on this question, while our numerical lower bounds on $\KG{d}{2}$ could turn useful when looking for estimates of the benefit of considering higher-dimensional quantum systems.

We also note that our techniques naturally apply in the complex case, for which similar Grothendieck constants have also been defined.
However, since the complex finite-order Grothendieck constants do not have known interpretations, we refrained from deriving bounds on them, although our code is already capable of dealing with this case.

\section{Code availability}

The code used to obtain the results presented in this article is part of the \texttt{Julia} package \texttt{BellPolytopes.jl} introduced in~\cite{DIB+23} and based on the \texttt{FrankWolfe.jl} package~\cite{BCP22,BDH+24}.
While the algorithm used to derive bounds on $\KG{d}{}$ is directly available in the standard version of this package, the extension to generalised constants $\KG{d}{n}$ can be found on the branch \texttt{mapsto} of this repository.
Installing this branch can be directly done within \texttt{Julia} package manager by typing \texttt{add BellPolytopes\#mapsto}.

\section{Acknowledgements}
\label{sec:acks}

The authors thank
Péter Diviánszky for fruitful discussions and his help to run the branch-and-bound algorithm from~\cite{DBV17} on asymmetric instances;
James A.~Reeds and Neil J.~A.~Sloane for sharing the details of the derivation of the value $45/31$ for the $E_8$ root system mentioned in~\cite{FR94};
Timotej Hrga, Nathan Krislock, Thi Thai Le, and Daniel Rehfeldt for trying to solve our symmetric instances with various solvers;
Fernando Mário de Oliveira Filho for asking details about the proof of \cref{prop:upper}, which helped us improve its presentation;
Mathieu Besançon, Daniel Brosch, Wojciech Bruzda, Dardo Goyeneche, Volker Kaibel, David de Laat, Frank Valentin, and Karol Życzkowski for inspiring discussions.
This research was partially supported by the DFG Cluster of Excellence MATH+ (EXC-2046/1, project id 390685689) funded by the Deutsche Forschungsgemeinschaft (DFG).
T.~V.~acknowledges the support of the European Union (QuantERA eDICT, CHIST-ERA MoDIC), the National Research, Development and Innovation Office NKFIH (Grants No.~2019-2.1.7-ERA-NET-2020-00003, No.~2023-1.2.1-ERA\_NET-2023-00009 and No.~K145927) and the `Frontline' Research Excellence Program of the NKFIH (No.~KKP133827). 

\bibliography{DVP26}

\end{document}